\theoremstyle{plain}
\newtheorem{theorem}{Theorem}
\newtheorem{corollary}[theorem]{Corollary}
\newtheorem{proposition}[theorem]{Proposition}
\theoremstyle{definition}
\newtheorem{example}[theorem]{Example}
\newtheorem{definition}[theorem]{Definition}
\newtheorem{remark}[theorem]{Remark}
\theoremstyle{remark}
\begin{document}
\title{Lifting multiplicative lattices to ideal sytems}

\author{Tiberiu Dumitrescu, Mihai Epure and Alexandru Gica}

\address{Facultatea de Matematica si Informatica,University of Bucharest,14 A\-ca\-de\-mi\-ei Str., Bucharest, RO 010014,Romania}\email{tiberiu@fmi.unibuc.ro, tiberiu\_dumitrescu2003@yahoo.com}

\address{Simion Stoilow Institute of Mathematics of the Romanian AcademyResearch unit 5, P. O. Box 1-764, RO-014700 Bucharest, Romania}\email{mihai.epure@imar.ro, epuremihai@yahoo.com}

\address{Facultatea de Matematica si Informatica,University of Bucharest,14 A\-ca\-de\-mi\-ei Str., Bucharest, RO 010014,Romania}\email{alexandru.gica@unibuc.ro, alexgica@yahoo.com}

\thanks{2020 Mathematics Subject Classification: Primary 06B23, 20M12,  Secondary 13F05.}
\thanks{Key words and phrases: multiplicative lattice, (weak) ideal system.}

\begin{abstract}
\noindent 
We present a mechanism which lifts a multiplicative lattice to a (weak) ideal  system  on some monoid.
\end{abstract}

\maketitle
A {\em multiplicative lattice} is a complete lattice with least element $0$ and greatest element $1$, on which there is defined a commutative completely join distributive monoid operation  whose identity is $1$. We write simply {\em lattice} to mean a multiplicative lattice.
By a {\em monoid} we mean a commutative monoid with identity element $1$ and zero element $0$.

A (weak) ideal  system  on some monoid (see Definition \ref{4}) gives the multiplicative lattice of its $r$-ideals (see Theorem \ref{5}). In this  short paper we take the inverse direction providing a lifting procedure of multiplicative lattices  to  (weak) ideal  systems. This procedure is inspired by the work of Aubert \cite{Au} and Lediaev  \cite{L} where there are results on lifting   multiplicative lattices to x-systems.

We obtain the following results. Let $L$ be a lattice and $H$ a submonoid of $L$ generating $L$ as a lattice (such $H$ is named in this paper a {\em wire}, see Definition \ref{9}). Then $H$ gives   a weak ideal  system $r$ on $H$ (Theorem \ref{51}, Corollary \ref{83} and Proposition \ref{82}). 
This $r$ is an ideal  system iff $H$ is a so-called  {\em M-wire} (see Definition \ref{9}). 
A lattice which is liftable to an ideal system is generated by meet principal elements, while a  lattice domain which is generated by   principal elements is liftable to an ideal system (Proposition \ref{82}).
See the definition for "(meet) principal element" in the next paragraph.
In Proposition \ref{84} we investigate some M-wires of  the lattice $\mathbb{N}$ (with usual number  multiplication where $\bigvee=gcd$ and $\bigwedge=lcm$) given by the norm function of a ring of quadratic integers.
As an application of our results, we give a natural procedure to associate to a given lattice   $L$   another lattice   $L'$ generated by compact elements (see Remark \ref{10} and Example \ref{85}).
\\[2mm]

Let $L$ be a lattice. Denote by $\vee$ resp.  $\wedge$ its join resp. meet. If $a,b\in L$, we denote by $[a,b]$ the interval $\{x\in L| a\leq x\leq b\}$.
For $a,b\in L$, $(a:b)$ is the join of all $y\in L$ with $by\leq a$.
Recall the following definitions due to Dilworth \cite {D}.
An element $x\in L$ is said to be {\em meet principal} if $a\wedge xb = x((a:x) \wedge b)$ for all $a,b\in L$. Next $x$ is called 
{\em weak meet principal} if the preceding equality holds  for all $a\in L$ and $b=1$.
An element $x\in L$ is said to be {\em join principal} if 
$a\vee (b:x) = (ax \wedge b):x$ for all $a,b\in L$. Next $x$ is called {\em weak join principal} if the preceding equality holds  for all $a\in L$ and $b=0$. Finally $x\in L$ is called {\em (weak) principal} if it is both (weak) meet principal and (weak) join principal.

An element $x\in L$ is said to be {\em compact} if $x\leq \bigvee A$ with $A\subseteq L$ implies $x\leq \bigvee B$ for some finite subset $B$ of $A$. We say that a subset $C$ of $L$ {\em generates} $L$ if every element of $L$ is a join of some elements in $C$.
Any undefined notation or terminology is standard as in \cite{H} or \cite{A}.
\\[1mm]

We recall the definition of a  (weak) ideal system cf.
\cite[Chapter 2]{H}.

\begin{definition}\label{4}
Let $H$ be a  monoid. A {\em weak ideal system} on $H$ is a map $r:\mathcal{P}(H) \rightarrow \mathcal{P}(H)$ satisfying the following axioms:

 \quad\quad  $(s1)$ $XH\subseteq X_r$ for all $X\subseteq H$,
 
 \quad\quad   $(s2)$ $X\subseteq Y\subseteq H$ implies $X_r\subseteq Y_r$
 
 \quad\quad   $(s3)$ $(X_r)_r=X_r$ for all $X\subseteq H$,
 
 \quad\quad   $(s4)$ $cX_r\subseteq (cX)_r$
 for all $X\subseteq H$ and  $c\in H$.
 \\
A weak ideal system $r$ is called an {\em ideal system} if equality always holds  in $(s4)$.
Also a (weak) ideal system $r$ is said to be {\em finitary} if 

 \quad\quad    $(s5)$ $X_r=\bigcup \{ Z_r| Z $ finite subset of $X\}$ for all $X\subseteq H$.
\\
 The elements in the image of $r$ are called {\em $r$-ideals}.
\end{definition}

The next result  follows immediately from  \cite[Propositions 2.1 and 2.3]{H} and definitions.
 
\begin{theorem}\label{5}
Let $H$ be a  monoid and $r$ a weak  ideal system on $H$. 
Then the set
$$I_r(H) := \{ X_r \ |\ X\subseteq H\}$$   
of all r-ideals of $H$ 
is a  lattice w.r.t. following operations

\quad\quad   multiplication: $(X,Y)\mapsto (XY)_r$ for all $X,Y \in I_r(H)$,
 
\quad\quad join: $ \bigvee \Gamma := (\bigcup \Gamma)_r$ for all $\Gamma \subseteq I_r(H)$,

\quad\quad meet: $ \bigwedge \Gamma := \bigcap \Gamma$ for all $\Gamma \subseteq I_r(H)$,
\\
where $\bigcup \Gamma$ resp. $\bigcap \Gamma$ are the union resp.  intersection of all members of $\Gamma$.
\\ If $r$ is finitary, then   $S=:\{\{a\}_r\ |\ a\in H\}$ is a generationg submonoid of lattice $I_r(H)$ consisting of compact elements.
\end{theorem}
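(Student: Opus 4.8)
The plan is to recognize $r$ as a closure operator on $\mathcal{P}(H)$ and then identify $I_r(H)$ with its lattice of closed sets, putting the multiplicative structure on top. First I would check that $r$ is extensive, monotone and idempotent: extensivity $X\subseteq X_r$ comes from $(s1)$ together with $1\in H$ (so $X=X\{1\}\subseteq XH\subseteq X_r$), monotonicity is $(s2)$, and idempotence is $(s3)$. By the standard theory of closure operators the closed sets — here exactly the $r$-ideals — form a complete lattice under inclusion, in which $\bigwedge\Gamma=\bigcap\Gamma$ (one checks that an intersection of $r$-ideals is again an $r$-ideal, using monotonicity) and $\bigvee\Gamma=(\bigcup\Gamma)_r$. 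The greatest element is $H=\{1\}_r$ and the least is $\emptyset_r$.

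The technical heart of the argument is the identity
$$(XY_r)_r=(XY)_r\qquad\text{for all }X,Y\subseteq H.$$
The inclusion $(XY)_r\subseteq(XY_r)_r$ is monotonicity applied to $XY\subseteq XY_r$; conversely, for each $x\in X$ axiom $(s4)$ gives $xY_r\subseteq(xY)_r\subseteq(XY)_r$, whence $XY_r\subseteq(XY)_r$ and, after applying $r$ and $(s3)$, $(XY_r)_r\subseteq(XY)_r$. Note this uses $(s4)$ only as an inclusion, so it is available already for weak ideal systems. With this identity in hand I would verify the monoid axioms for the multiplication $(X,Y)\mapsto(XY)_r$: commutativity is immediate from $XY=YX$; associativity follows from $((XY)_rZ)_r=(XYZ)_r=(X(YZ)_r)_r$; and $H$ is the identity since $(XH)_r=X$ for every $r$-ideal $X$ by $(s1)$. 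Complete join-distributivity is the same computation applied to an arbitrary union, namely $(X(\bigcup_i Y_i)_r)_r=(X\bigcup_i Y_i)_r=(\bigcup_i XY_i)_r=(\bigcup_i(XY_i)_r)_r$, i.e. $X\cdot\bigvee_iY_i=\bigvee_i(X\cdot Y_i)$. The fact that $0=\emptyset_r$ is absorbing then comes for free by taking the empty join. This establishes that $I_r(H)$ is a multiplicative lattice.

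For the finitary statement I would argue as follows. The set $S$ is a submonoid because $\{a\}_r\cdot\{b\}_r=(\{a\}_r\{b\}_r)_r=\{ab\}_r$ by the identity above, and it contains the lattice identity $\{1\}_r=H$. It is generating since every $r$-ideal satisfies $X=\bigvee_{a\in X}\{a\}_r$; indeed $\bigcup_{a\in X}\{a\}_r=X$, because each $\{a\}_r\subseteq X_r=X$ while the union already contains $X$. Finally, each $\{a\}_r$ is compact: if $\{a\}_r\leq\bigvee\Gamma=(\bigcup\Gamma)_r$ then $a\in(\bigcup\Gamma)_r$, so by $(s5)$ there is a finite $Z\subseteq\bigcup\Gamma$ with $a\in Z_r$; the finitely many elements of $Z$ lie in finitely many members $Y_1,\dots,Y_n$ of $\Gamma$, giving $a\in Z_r\subseteq(\bigcup_{i=1}^nY_i)_r=Y_1\vee\cdots\vee Y_n$, and hence $\{a\}_r\leq Y_1\vee\cdots\vee Y_n$.

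I expect the only real obstacle to be the key identity $(XY_r)_r=(XY)_r$ together with its clean deployment: once it is proved, associativity, distributivity and the submonoid property all reduce to it, while the compactness claim is the single place where finitariness $(s5)$ is genuinely needed — and there the point is merely that a finite subset of a union of $r$-ideals meets only finitely many of them.
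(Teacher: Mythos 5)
Your argument is correct, but it is worth noting that the paper itself supplies no proof of this statement: it is dispatched with the single remark that it ``follows immediately from [Halter-Koch, Propositions 2.1 and 2.3] and definitions.'' What you have written is therefore a genuinely self-contained replacement for that citation. Your organizing choices are the natural ones and match what one finds in Halter-Koch: axioms $(s1)$--$(s3)$ make $r$ a closure operator (your observation that extensivity needs $1\in H$ via $X=X\{1\}\subseteq XH\subseteq X_r$ is exactly the right reading of $(s1)$), so the $r$-ideals form a complete lattice with $\bigwedge=\bigcap$ and $\bigvee\Gamma=(\bigcup\Gamma)_r$; and the identity $(XY_r)_r=(XY)_r$, proved using $(s4)$ only as an inclusion, is indeed the one lemma from which associativity, the unit law $(XH)_r=X$, complete join-distributivity, and the submonoid property $\{a\}_r\cdot\{b\}_r=\{ab\}_r$ all follow by routine substitution. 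The finitary part is also handled correctly: generation comes from $X=\bigvee_{a\in X}\{a\}_r$, and compactness of $\{a\}_r$ is precisely where $(s5)$ enters, with the only combinatorial point being that a finite subset of $\bigcup\Gamma$ is covered by finitely many members of $\Gamma$. The one micro-step you state without proof, $(\bigcup_i XY_i)_r=(\bigcup_i(XY_i)_r)_r$, follows by the same sandwich-and-close argument you use for the key identity, so nothing is missing. In short: the proposal buys the reader a complete verification that the paper outsources to the literature, at the cost of a page of standard closure-operator bookkeeping.
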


Let $L$ be a lattice.  We look for a weak ideal system   $r$ whose $r$-ideal lattice $I_r(H)$   is isomorphic to $L$.
In this case we say that $r$ is a  {\em lifting} of $L$ or that $L$ is {\em liftable} (to $r$). Getting inpiration from  \cite{Au} and \cite{L}, we introduce the following concept.

\begin{definition}\label{9}
Let $L$ be a lattice. By a {\em wire}
 $H\subseteq L$ we mean a  submonoid of  $L$ which generates $L$ as lattice. A wire $H$ is called an {\em M-wire} if it satisfies the  following condition:

$(M)$\ \ \ \ \  if $s\leq ta$ with $s,t\in H$ and $a\in L$, then $s=tu$ for some $u\in H\cap [0, a]$.
\end{definition}
 
The next theorem is the main result of the paper. 

\begin{theorem} \label{51}
Let $H$ be a wire of a lattice   $L$. Then  the map
\begin{center}
$r:\mathcal{P}(H) \rightarrow \mathcal{P}(H)$   given by  $X_r=H\cap [0,\bigvee X]$. 
\end{center}
 is a weak ideal system which is a lifting of $L$.
\end{theorem}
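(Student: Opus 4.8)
The plan is to isolate a single \emph{generation lemma} and then let it drive every subsequent verification. The lemma is: for every $a\in L$ one has $\bigvee(H\cap[0,a])=a$. The inequality $\bigvee(H\cap[0,a])\le a$ is immediate since each member of $H\cap[0,a]$ lies below $a$; the reverse inequality is exactly where the hypothesis that $H$ is a wire, i.e.\ generates $L$, enters: writing $a=\bigvee S$ with $S\subseteq H$, every $s\in S$ satisfies $s\le a$, hence $S\subseteq H\cap[0,a]$ and $a=\bigvee S\le\bigvee(H\cap[0,a])$. I expect this lemma to be the quiet linchpin of the whole argument.

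First I would check the four axioms. For $(s1)$, given $x\in X$ and $h\in H$ the product $xh$ lies in $H$ (submonoid) and $xh\le x\cdot 1=x\le\bigvee X$ because $1$ is the top element and multiplication is order preserving; hence $XH\subseteq X_r$. Axiom $(s2)$ is just monotonicity of $a\mapsto[0,a]$. For $(s3)$ I would apply the generation lemma with $a=\bigvee X$: it gives $\bigvee(X_r)=\bigvee X$, whence $(X_r)_r=H\cap[0,\bigvee X]=X_r$. For $(s4)$, for $y\in X_r$ and $c\in H$ I have $cy\in H$ and, by complete join distributivity, $cy\le c\bigvee X=\bigvee(cX)$, so $cy\in(cX)_r$, proving $cX_r\subseteq(cX)_r$. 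None of these is deep, but it is worth flagging which structural feature each consumes: monotonicity and $1=\top$ for $(s1)$, the generation lemma for $(s3)$, and complete join distributivity for $(s4)$.

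Next I would identify the $r$-ideals. Since $\bigvee X$ ranges over all of $L$ as $X$ ranges over $\mathcal P(H)$ (again because $H$ generates $L$), and since the generation lemma shows $H\cap[0,a]=(H\cap[0,a])_r$ is itself an $r$-ideal, I obtain
\[
I_r(H)=\{\,H\cap[0,a]\ |\ a\in L\,\}.
\]
This lets me define $\phi:L\to I_r(H)$ by $\phi(a)=H\cap[0,a]$. Surjectivity is the displayed description, and injectivity is the generation lemma, since $H\cap[0,a]=H\cap[0,b]$ forces $a=\bigvee(H\cap[0,a])=\bigvee(H\cap[0,b])=b$. The same computation shows $\phi$ and its inverse are order preserving, so $\phi$ is an order isomorphism.

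Finally I would confirm $\phi$ respects the three operations of Theorem \ref{5}. Meets are transparent: $\bigcap_i(H\cap[0,a_i])=H\cap[0,\bigwedge_i a_i]$. For joins I compute $\bigvee(\bigcup_i\phi(a_i))=\bigvee_i a_i$ (the generation lemma applied termwise) and read off $(\bigcup_i\phi(a_i))_r=\phi(\bigvee_i a_i)$. The one step carrying real content is multiplication: I must show $(\phi(a)\phi(b))_r=\phi(ab)$, i.e.\ $\bigvee\{xy: x\in H\cap[0,a],\,y\in H\cap[0,b]\}=ab$. Here $\le$ is monotonicity, while the crucial inequality $\ge$ is precisely complete join distributivity together with the generation lemma, giving $ab=(\bigvee(H\cap[0,a]))(\bigvee(H\cap[0,b]))=\bigvee_{x,y}xy$. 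Matching $\phi(1)=H$ and $\phi(0)=\{0\}$ with the identity and least element of $I_r(H)$ then finishes the isomorphism, so $r$ lifts $L$. I expect this multiplication identity to be the only place demanding genuine care; everything else is order-theoretic bookkeeping resting on the generation lemma.
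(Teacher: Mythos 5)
Your proposal is correct and follows essentially the same route as the paper: the "generation lemma" $\bigvee(H\cap[0,a])=a$ is exactly the fact the paper invokes (as "$\bigvee X_r=\bigvee X$ since $H$ generates $L$") for axiom $(s3)$ and for showing the maps $y\mapsto H\cap[0,y]$ and $X\mapsto\bigvee X$ are mutually inverse, and your multiplication step via complete join distributivity matches the paper's computation $\bigvee(XY)=(\bigvee X)(\bigvee Y)$. The only cosmetic difference is that you verify joins and meets directly while the paper deduces them from the maps being order-preserving inverse bijections.
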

\begin{proof} 

We  check that $r$ satisfies conditions $(w1)$ to $(w4)$ of Definition \ref{4}.
Let $X\subseteq Y\subseteq   H$. 
 For $h\in H$ and $x\in X$,  we have $hx\leq x\leq \bigvee X$, so $hx\in X_r$, thus  $(w1)$ holds. 
Since $X\subseteq Y\subseteq H$, we have $\bigvee X\leq \bigvee Y$, so 
 $$X_r=H\cap [0,\bigvee X] \subseteq H\cap [0,\bigvee Y]=Y_r$$ thus  $(w2)$ holds. 
As $H$   generates   $L$, we have $$\bigvee X_r=\bigvee (H\cap [0,\bigvee X])=\bigvee X$$ so $$(X_r)_r=  H\cap [0,\bigvee X_r] = H\cap [0,\bigvee X] = X_r$$  thus  $(w3)$ holds. 
For $c\in H$ we have
$$
cX_r = c(H\cap [0,\bigvee X]) \subseteq H\cap [0,\bigvee cX] = (cX)_r
$$
so condition $(w4)$ 
holds.
We  show  that $L$ is isomorphic to the lattice of $r$-ideals $I_r(H)$.
Consider the maps 
$$f:I_r(H)\rightarrow L \ \ \mbox{ given by }  \ \ X\mapsto \bigvee X$$ and 
$$g:L\rightarrow I_r(H) \ \ \mbox{ given by } \ \ y\mapsto H \cap [0,y].$$
As $L$ is  generated by $H$, we have $\bigvee g(y)=y $, so 
 $g(y)$ is indeed an $r$-ideal of $H$.

For $X\in I_r(H)$, we have $$(gf)(X)=H\cap [0,\bigvee X]=X_r=X.$$ 
Also, for $y\in L$, we have 
$$(fg)(y)=\bigvee(H\cap [0,y])=y$$
as noticed above. Hence $f$ and $g$ are inverse to each other. For $X,Y\in W$, we have $$f((XY)_r)=\bigvee (XY)_r =\bigvee (XY) =(\bigvee X)(\bigvee Y)=f(X)f(Y)$$ so $f$ is a monoid morphism. 
If $X,Y\in I_r(H)$ and $X\subseteq Y$, then 
$$ f(X) = \bigvee X \leq \bigvee Y = f(Y).
$$
Conversely, if $x,y\in L$ and $x\leq y$, then 
$$ g(x) = H \cap [0,x] \subseteq H \cap [0,y] = g(y).
$$
Hence $f$ and $g$ are increasing maps.
Thus $f$ is a lattice isomorphism.
\end{proof}

\begin{corollary}\label{83}
Under  Theorem \ref{51} assumptions,  we have

$(i)$ $r$ is an ideal system iff   $H$ is an $M$-wire.

$(ii)$ $r$ is finitary iff   $H$ consists of compact elements.

\end{corollary}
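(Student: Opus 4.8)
The plan is to treat the two equivalences separately, and in each case to exploit that Theorem \ref{51} has already established the weak ideal system axioms, so that one inclusion in the relevant identity comes for free and only the reverse inclusion must be analyzed. That reverse inclusion will then be translated, using the hypothesis that $H$ generates $L$, into exactly the stated condition on $H$.

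For part $(i)$, recall from the proof of Theorem \ref{51} that $cX_r \subseteq (cX)_r$ always holds; hence $r$ is an ideal system precisely when $(cX)_r \subseteq cX_r$ for all $X \subseteq H$ and $c \in H$. First I would rewrite both sides explicitly: by complete join distributivity $\bigvee cX = c\bigvee X$, so $(cX)_r = H \cap [0, c\bigvee X]$ while $cX_r = c(H \cap [0, \bigvee X])$. Thus the reverse inclusion says: whenever $s \in H$ and $s \leq c\bigvee X$, there is $u \in H \cap [0, \bigvee X]$ with $s = cu$. Setting $a = \bigvee X$ and $t = c$, this is precisely condition $(M)$ of Definition \ref{9}, so if $H$ is an M-wire the inclusion holds and $r$ is an ideal system. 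For the converse I would start from an arbitrary instance $s \leq ta$ of $(M)$ with $s, t \in H$ and $a \in L$, take $X = H \cap [0, a]$ (so $\bigvee X = a$ since $H$ generates $L$), and observe $s \leq t\bigvee X = \bigvee tX$, whence $s \in (tX)_r = tX_r = t(H \cap [0, a])$ by the ideal system property; this yields the required $u$. The only delicate point is this passage from the ``for all $X \subseteq H$'' form of $(s4)$ to the ``for all $a \in L$'' form of $(M)$, which is exactly where the generating hypothesis is invoked.

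For part $(ii)$, I would again note that one inclusion is automatic: if $Z \subseteq X$ is finite then $\bigvee Z \leq \bigvee X$, so $Z_r \subseteq X_r$ and hence $\bigcup\{Z_r : Z \text{ finite} \subseteq X\} \subseteq X_r$. Therefore $r$ is finitary iff the reverse inclusion holds, which unwinds to the statement: for every $s \in H$ and $X \subseteq H$, if $s \leq \bigvee X$ then $s \leq \bigvee Z$ for some finite $Z \subseteq X$. If every element of $H$ is compact this is immediate, being the defining property of compactness specialized to $A = X \subseteq H \subseteq L$.

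The converse is where the real work lies. Assuming $r$ finitary, I would take $s \in H$ and an arbitrary $A \subseteq L$ with $s \leq \bigvee A$, and must produce a finite $B \subseteq A$ with $s \leq \bigvee B$. Since $H$ generates $L$, write each $y \in A$ as $y = \bigvee X_y$ with $X_y \subseteq H$ and set $X = \bigcup_{y \in A} X_y$; then $\bigvee X = \bigvee A \geq s$, so the finitary hypothesis gives $s \leq \bigvee Z$ for a finite $Z \subseteq X$. Finally I would repackage: each element of the finite set $Z$ lies in some $X_y$, so $Z$ meets only finitely many of the $X_y$, say those indexed by $y_1, \dots, y_n$, and then $s \leq \bigvee Z \leq y_1 \vee \cdots \vee y_n$, exhibiting $B = \{y_1, \dots, y_n\} \subseteq A$. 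The main obstacle in both parts is thus the same translation between joins indexed over subsets of $L$ and over subsets of $H$; it is harmless but must invoke the generating property of the wire at exactly the right moment.
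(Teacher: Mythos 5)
Your proposal is correct and follows essentially the same route as the paper's own proof: both directions of $(i)$ reduce $(s4)$ to condition $(M)$ via the identity $\bigvee cX = c\bigvee X$ and the choice $a=\bigvee X$ (resp. $X=H\cap[0,a]$), and both directions of $(ii)$ translate between joins over subsets of $L$ and of $H$ exactly as in the paper. Your version merely makes more explicit that one inclusion in each axiom is automatic and that the generating property of the wire is the bridge between the two quantifications; no gap.
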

\begin{proof}
$(i)$ implication $(\Leftarrow)$.
Let $h,c\in H$ such that $h\leq c(\bigvee X)$.
As $H$ is an M-wire, we get
$h=ck$ for some   $k\in H$ with $k\leq \bigvee X.$
Since $L$ is generated by $H$,
we have $$(cX)_r=H\cap [0,\bigvee cX] = H\cap [0,c(\bigvee X)] \subseteq c(H\cap [0,\bigvee X]) = cX_r$$
so   $(s4)$ holds. Thus $r$ is an ideal system. 

$(i)$ implication $(\Rightarrow)$. Suppose that $s\leq ta$ with $s,t\in H$ and $a\in L$. Since $H$ generates $L$, $a=\bigvee X$ for some $X\subseteq H$.  Then $s\in (tX)_r= tX_r$, so   $s=tu$ for some $u\in H$ with $u\leq a$.

$(ii)$ implication $(\Leftarrow)$.   Let $X\subseteq H$ and $a\in X_r$; so   $a\leq \bigvee X$. As $a$ is compact we get $a\leq \bigvee Z$ for some finite subset of $Z$ of $X$. Thus   $a\in Z_r$, so $r$ is finitary.

$(ii)$ implication $(\Rightarrow)$. Let $s\in H$ and $\{a_\alpha\}_{\alpha\in I} \subseteq L$ such that $s\leq \bigvee_{\alpha\in I} a_\alpha$. Write $a_\alpha = \bigvee X_\alpha$ with $X_\alpha\subseteq H$.
Then $s\in (\bigcup_{\alpha\in I} X_\alpha)_r$, so $s\in (\bigcup_{\alpha\in J} X_\alpha)_r$ for some finite subset $J\subseteq I$ since $r$ is finitary. We get $s\leq \bigvee_{\alpha\in J} a_\alpha$ so $s$ is compact.
\end{proof}

\begin{example}
Consider the lattice $L=\{0,1,a,b,c,d \}$ ordered by 
$a\leq b\leq d$ and  $a\leq c\leq d$
  with multiplication  
$$xy=0 \mbox{ for all }  x,y\in \{a,b,c,d\}.$$
It's   easy to check  that $H=\{0, a,b,c,1\}$ is a  wire, so $L$ lifts to a weak ideal system $r$ whose $r$-ideals are 
$$\{ 0\}, \{ 0,a\}, \{ 0,a,b\}, \{ 0,a,c\},\{ 0,a,b,c\}, \{ 0,a,b,c,1\}$$ 
cf. Theorem \ref{51}. As the weak meet elements are $0$, $a$ and $1$ is not liftable to an ideal system, cf. Proposition \ref{82} $(ii)$.
\end{example}

A lattice $L$ is called a {\em domain lattice} if $ab=0$ with $a,b\in L$ implies $a=0$ or $b=0$.

\begin{proposition}\label{82}
The following assertions are true.

$(i)$ Any lattice can be lifted to a weak ideal system.

$(ii)$ A lattice which is liftable to an ideal system is generated by meet principal elements.

$(iii)$ A  lattice domain which is generated by   principal elements is liftable to an ideal system.
\end{proposition}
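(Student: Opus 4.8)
The plan is to reduce each part to Theorem~\ref{51} and Corollary~\ref{83}, i.e.\ to exhibiting a suitable (M-)wire. Part (i) is then immediate: $H=L$ is a submonoid of $L$ that generates $L$, hence a wire, and Theorem~\ref{51} produces a weak ideal system lifting $L$ (explicitly $X_r=[0,\bigvee X]$).

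For (ii) I would first isolate a lemma: \emph{in any M-wire $H$ of $L$, every $t\in H$ is meet principal}. The inequality $t((a:t)\wedge b)\le a\wedge tb$ holds for any $t$ by complete join distributivity (which gives $t(a:t)\le a$). For the reverse, since $H$ generates $L$ it suffices to show $s\le t((a:t)\wedge b)$ for each $s\in H$ with $s\le a\wedge tb$; applying condition $(M)$ of Definition~\ref{9} to $s\le tb$ gives $s=tu$ with $u\in H$, $u\le b$, and then $tu\le a$ forces $u\le(a:t)$, so $s=tu\le t((a:t)\wedge b)$. Now suppose $L\cong I_r(H)$ for an ideal system $r$. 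The principal $r$-ideals $\{a\}_r$ form a submonoid generating $I_r(H)$ (each $X_r=\bigvee_{a\in X}\{a\}_r$), hence a wire; it is an M-wire because $\{s\}_r\le\{t\}_r\cdot X_r=(tX)_r$ gives $s\in(tX)_r=tX_r$ — here the equality is exactly axiom $(s4)$ for an ideal system — so $s=tu$ with $u\in X_r$, whence $\{s\}_r=\{t\}_r\{u\}_r$ with $\{u\}_r\le X_r$. By the lemma each $\{a\}_r$ is meet principal, and transporting along the isomorphism shows $L$ is generated by meet principal elements.

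For (iii) I would take $H$ to be the set $P$ of principal elements: it contains $0$ and $1$, is closed under multiplication (products of principal elements are principal), and generates $L$ by hypothesis, so $P$ is a wire. To verify $(M)$, let $s\le ta$ with $s,t\in P$. If $s=0$ or $t=0$ take $u=0$; otherwise $t\ne0$ and $s\le ta\le t$, and I set $u:=(s:t)$. Meet principality of $t$ together with $s\le t$ gives $tu=s\wedge t=s$, and residuating $s\le ta$ gives $u\le(ta:t)=a$, where the domain hypothesis enters through $(0:t)=0$ (so that $(tx:t)=x$ by join principality of $t$). The essential point is that $u=(s:t)$ is again principal: multiplying the principal identities for $u$ by $t$ and using $tu=s$ reduces them to those for $s$ and $t$. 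For instance $t(a'\wedge ub')\le ta'\wedge sb'=s((a':u)\wedge b')$, the last equality by meet principality of $s$ and the residuation identity $(ta':s)=((ta':t):u)=(a':u)$; and since multiplication by $t\ne0$ is an order-embedding in a domain (because $(tx:t)=x$), this yields $a'\wedge ub'\le u((a':u)\wedge b')$. Join principality of $u$ is entirely analogous. Hence $P$ is an M-wire and Corollary~\ref{83}(i) delivers the lifting to an ideal system.

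I expect the principality of $(s:t)$ to be the main obstacle, since it is the only step that genuinely uses the domain hypothesis and the full (meet and join) principality of $s$ and $t$; everything else is a formal reduction to Theorem~\ref{51} and Corollary~\ref{83}.
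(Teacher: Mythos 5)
Your argument is correct and follows the paper's overall strategy of reducing everything to Theorem~\ref{51} and the (M-)wire criterion; part (i) is identical. The differences are in the details of (ii) and (iii). For (ii) the paper argues directly: the principal $r$-ideals $aH=\{a\}_r$ generate $I_r(H)$ and each satisfies the ``obvious'' identity $A\cap Ba=a((A:a)\cap B)$, which rests on the ideal-system equality $aX_r=(aX)_r$ in exactly the same way as your verification that the principal $r$-ideals form an M-wire. Your detour through the lemma that \emph{every element of an M-wire is meet principal} is a correct and genuinely more general route: it isolates the lattice-theoretic content (condition $(M)$ applied to $s\le tb$, plus $t(a:t)\le a$ from complete join distributivity) and yields (ii) for any M-wire, not only the one arising from principal $r$-ideals. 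For (iii) the paper simply cites \cite[Corollary 3.3]{D} for closure of principal elements under products and \cite[Theorem 7]{AJ} for the key step (in a domain lattice, $s\le ta$ with $s,t$ principal forces $s=tu$ with $u$ principal and $u\le a$), whereas you reprove that step from scratch with $u=(s:t)$; your computation (weak meet principality of $t$ gives $tu=s\wedge t=s$, join principality together with $(0:t)=0$ gives $(ta:t)=a$ and makes multiplication by $t\ne 0$ an order-embedding, and principality of $u$ then follows by cancelling $t$) is precisely the content of the cited result, so your version is more self-contained at the cost of length. One small remark: you work with the join-principal identity in the form $a\vee(b:x)=(ax\vee b):x$, while the paper's Definition prints $(ax\wedge b):x$; the latter is evidently a typo (with $b=0$ it would force $a\le(0:x)$ for every $a$), so your corrected reading is the right one.
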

\begin{proof}
Let $L$ be a lattice.  $(i)$ follows applying Theorem \ref{51} for 
 $H=L$. 
 
$(ii)$ Suppose that $L$   is liftable to an ideal system $r$ on a monoid $H$. Since the principal $r$-ideals $aH=\{a\}_r$, $a\in H$, generate $I_r(H)$ it suffices to show that each $aH$ is a   meet principal element of $I_r(H)$. Indeed, if $A,B$ are $r$-ideals, we have the obvious equality $A\cap Ba = a((A:a)\cap B)$. 

$(iii)$ Suppose that $L$ is generated by its subset $H$ of  principal elements. By \cite[Corollary 3.3]{D}, $H$ is a submonoid of $L$, so $H$ is a wire. Suppose that $s\leq ta$ with $s,t\in H$ and $a\in L$. As $t$ is principal, $s=tu$ for some $u\leq a$ in $H$ cf. \cite[Theorem 7]{AJ}.
So $H$ is an M-wire, hence $L$ is liftable to an ideal system cf. Theorem \ref{51}.
\end{proof}

\begin{example}\label{90}
Consider the lattice $\mathbb{N}$ with usual number  multiplication where $\bigvee=gcd$ and $\bigwedge=lcm$.
Let $D$ be a ring of algebraic integers. Sending each 
$X\subseteq D$ into $X_r=$ the ideal generated by $X$, we get an ideal system whose     $r$-ideal    lattice is the usual ideal lattice $I_D$ of $D$. 
Note that the set of principal ideals of $D$ is an  M-wire of $I_D$. 
It is well-known that $I_D$ is isomorphic to $\mathbb{N}$. 
So lattice $\mathbb{N}$ can be lifted to an ideal system in infinitely many ways and it has infinitely many M-wires. Our next result explores some M-wires of $\mathbb{N}$ given by the norm function on a ring of quadratic integers.
\end{example}

Let $D$ be a nonfactorial ring of quadratic integers with class group $G$
and let $N:D\rightarrow \mathbb{N}$ the absolute value of its norm function. Let $S$ be the multiplicatively closed subset of $\mathbb{N}$ generated by the image $Im(N)$ of $N$ and the set $I$ of all prime numbers which are inert in $D$.

\begin{proposition} \label{84}
With notation above, $S$ is an M-wire on lattice $\mathbb{N}$ (see Example \ref{90}) iff $G$ is a finite product of copies of $\mathbb{Z}_2$.
\end{proposition}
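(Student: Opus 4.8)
The plan is to translate condition $(M)$ into an arithmetic statement about $S$, then compute $S$ explicitly in terms of the class group $G$; the hypothesis on $G$ will enter at exactly one (decisive) point. First I would record the shape of the lattice $\mathbb{N}$ from Example \ref{90}: since $\bigvee=\gcd$ and $1$ is the top element, the order is reverse divisibility, $a\le b\iff b\mid a$, and the interval $[0,a]$ is the set of all multiples of $a$. Because $0=N(0)$ and $1=N(1)$ lie in $S$ and $S$ is multiplicatively closed, $S$ is a submonoid. Unwinding Definition \ref{9}, condition $(M)$ for $S$ says: whenever $s,t\in S$ and $ta\mid s$ for some $a\in\mathbb{N}$, then $s=tu$ for some $u\in S$ with $a\mid u$. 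Since $u$ is forced to equal $s/t$ and $a\mid s/t$ holds automatically, $(M)$ is equivalent to the single clause
\[
s,t\in S,\ t\mid s\ \Longrightarrow\ s/t\in S,
\]
i.e. $S$ is closed under complementary divisors. Throughout I will use the standard facts that the ideal norm is multiplicative, that a rational prime $\ell$ is split ($\ell D=\mathfrak{l}\bar{\mathfrak{l}}$, $N\mathfrak{l}=\ell$), ramified ($\ell D=\mathfrak{l}^2$, $N\mathfrak{l}=\ell$) or inert ($\ell D=\mathfrak{l}$, $N\mathfrak{l}=\ell^2$), that $\mathrm{Im}(N)$ is exactly the set of norms of principal integral ideals, and that every ideal class contains infinitely many prime ideals of degree one (Chebotarev).

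The heart of the argument is a description of $S$. Writing an integral ideal as $\prod\mathfrak{l}^{e_{\mathfrak{l}}}$, the equation $N(\prod\mathfrak{l}^{e})=n$ fixes, for each rational prime $\ell$, the total exponent above $\ell$: an inert $\ell$ forces $v_\ell(n)$ even and contributes class $0$, a ramified $\ell$ contributes $v_\ell(n)[\mathfrak{l}]$, and a split $\ell$ contributes $(e_{\mathfrak{l}}-e_{\bar{\mathfrak{l}}})[\mathfrak{l}]$ with $e_{\mathfrak{l}}-e_{\bar{\mathfrak{l}}}$ ranging over all integers $\equiv v_\ell(n)\pmod 2$ in $[-v_\ell(n),v_\ell(n)]$. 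Hence $n\in\mathrm{Im}(N)$ iff the inert exponents are even and $0$ is attainable as such a sum of classes, and $S=\mathrm{Im}(N)\cdot\langle I\rangle$ (with $\langle I\rangle$ the submonoid generated by the inert primes) merely removes the evenness restriction. The key point, and where the hypothesis is used, is that if $G\cong(\mathbb{Z}_2)^k$ then each $[\mathfrak{l}]$ has order $\le 2$, so $(e_{\mathfrak{l}}-e_{\bar{\mathfrak{l}}})[\mathfrak{l}]=(v_\ell(n)\bmod 2)[\mathfrak{l}]$ independently of how the exponents split, and $[\bar{\mathfrak{l}}]=-[\mathfrak{l}]=[\mathfrak{l}]$ makes $[\mathfrak{l}]$ well defined. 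The class then depends only on parities, yielding a monoid homomorphism
\[
\Phi:(\mathbb{N}_{>0},\times)\to G,\qquad \Phi(n)=\sum_{\ell\ \text{split or ramified}}(v_\ell(n)\bmod 2)\,[\mathfrak{l}],
\]
with $S\cap\mathbb{N}_{>0}=\ker\Phi$ (and $0\in S$). I expect the careful verification of this characterization, especially that adjoining $\langle I\rangle$ does exactly what is claimed, to be the main obstacle.

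For the implication $(\Leftarrow)$, assume $G\cong(\mathbb{Z}_2)^k$ and use $S=\Phi^{-1}(0)$. Closure under complementary divisors is then immediate: for $t\mid s$ with $s,t\in S$ one has $\Phi(s/t)=\Phi(s)-\Phi(t)=0$, so $s/t\in S$, giving $(M)$. That $S$ is a \emph{wire} (generates $\mathbb{N}$) I would prove by showing $\gcd\{s\in S:\ m\mid s\}=m$ for every $m$: the square $m^2=N((m))$ lies in $S$ and has $v_\ell=0$ at every $\ell\nmid m$, while for a prime $\ell\mid m$ one produces an $S$-multiple of $m$ with $v_\ell$ equal to $v_\ell(m)$ by multiplying $m$ by a single auxiliary prime chosen (via Chebotarev) so as to cancel $\Phi(m)$; the resulting witnesses are all multiples of $m$ and their gcd is $m$. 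Hence $S$ is an M-wire.

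For $(\Rightarrow)$ I argue contrapositively. Suppose $G\not\cong(\mathbb{Z}_2)^k$, so some class $g\in G$ has $2g\neq 0$. Choose, by Chebotarev, prime ideals $\mathfrak{p}$ of class $g$ and $\mathfrak{q}$ of class $-2g$ lying over distinct split rational primes $p\neq q$. Then $p^2=N((p))$ and $p^2q=N(\mathfrak{p}^2\mathfrak{q})$ are norms of principal ideals, the latter because $[\mathfrak{p}^2\mathfrak{q}]=2g-2g=0$; thus $s:=p^2q$ and $t:=p^2$ lie in $\mathrm{Im}(N)\subseteq S$ with $t\mid s$. However $s/t=q\notin S$: the only integral ideals of norm $q$ are $\mathfrak{q},\bar{\mathfrak{q}}$, whose classes $-2g,2g$ are nonzero, so $q\notin\mathrm{Im}(N)$, and as $q$ is not inert it cannot come from $\langle I\rangle$ either. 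This violates the complementary-divisor form of $(M)$, so $S$ is not an M-wire, completing the contrapositive and hence the proof.
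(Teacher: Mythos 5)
Your proof is correct, but it takes a genuinely different route from the paper's, most visibly in the implication $(\Leftarrow)$. Like the paper, you begin by reducing condition $(M)$ to closure of $S$ under complementary divisors. From there the paper argues concretely: for $(\Leftarrow)$ it takes $a,b\in D$ with $N(a)\mid N(b)$, factors $aD$ and $bD$ into prime ideals, and uses $[\overline{P}]=-[P]=[P]$ to replace factors by conjugates (preserving norms) until $a$ divides $b$, so that $N(b)/N(a)=N(b/a)\in \mathrm{Im}(N)$; for $(\Rightarrow)$ it rules out odd-order classes (via $p^m,p^2\in \mathrm{Im}(N)$) and order-four classes (via $p^2,p^2q\in \mathrm{Im}(N)$) in two separate steps; and it proves the wire property unconditionally at the outset. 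You instead package the hypothesis $G\cong(\mathbb{Z}_2)^k$ into a monoid homomorphism $\Phi$ with $S\cap\mathbb{N}_{>0}=\ker\Phi$, which makes closure under complementary divisors (and the wire property) immediate, and your contrapositive for $(\Rightarrow)$ treats every class with $2g\neq 0$ by a single construction ($s=p^2q$, $t=p^2$, $s/t=q\notin S$), subsuming the paper's two cases in one. The trade-off: the paper's conjugation argument is more elementary and exhibits an explicit element of norm $N(b)/N(a)$, while your kernel description is structurally cleaner and localizes the use of the hypothesis to one point. The one step you flag but do not fully execute --- that $S\cap\mathbb{N}_{>0}=\ker\Phi$, in particular the inclusion $\ker\Phi\subseteq S$ --- does go through (if $\Phi(n)=0$, the ideal $\prod\mathfrak{l}^{v_\ell(n)}$ built over the non-inert primes dividing $n$ lies in the trivial class, so the non-inert part of $n$ is a norm and the inert part lies in $\langle I\rangle$), but it should be written out rather than asserted.
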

\begin{proof}

 We shall use repeatedly the following well-known Number Theory facts: $D$ is a Dedekind domain, $G$ is finite and every class $g\in G$ contains infinitely many prime ideals of $D$.
We first prove that $S$ generates $\mathbb{N}$ as a lattice (i.e. $S$ is a wire).
 It suffices to show that every prime number $p\in \mathbb{N}-I$ is the gcd of some numbers in $S$. Take a prime ideal $P$ of norm $p$. If $P$ is principal, then $p\in S$. Suppose that $P$ is not principal and let $e$ be its class in $G$. Inside $-e$ take another two prime ideals $Q$ and $R$ of norms $q$ and $r$ respectively. We may arrange that $p,q,r$ are distinct.
As $PQ$ and $PR$ are principal ideals, we get $pq,pr\in S$ and $p$ is their gcd.

Therefore, we may assume from the very beginning that $S$ is a wire. It's easy to see that $S$ is an $M$-wire iff $S$ is closed under division iff $Im(N)$ is closed under division (i.e. if $a,b\in Im(N)-\{0\}$ and $a|b$, then $b/a\in Im(N)$). So it remains to show that $Im(N)$ is closed under division iff $G$ is a finite product of copies of $\mathbb{Z}_2$.

Suppose that $Im(N)$ is closed under division. Then $G$ has no odd order element. Deny.  Let $P$ be a prime ideal of $D$ whose ideal class has  odd order $m$ and let $p$ be the norm of $P$. Then $p^m\in Im(N)$ and, since $p^2$ is clearly in $Im(N)$, we get that $p\in Im(N)$, as $Im(N)$ is closed under division. But this is a contradiction because $P$ is not principal.
To show that $G$ is a finite product of copies of $\mathbb{Z}_2$, it suffices to prove that $G$ has no element of order four. Deny. Let $g\in G$ of order four. Select  prime ideals $P,Q$  of norms $p,q$ in classes $g,2g$ respectively. Denote the conjugate of $P$ by $\overline{P}$. Since $P\overline{P}$ and $P^2Q$ are principal ideals, we get that $p^2$ and $p^2q$ are in $Im(N)$, so $q\in Im(N)$, as $Im(N)$ is closed under division. But this is a contradiction because $Q$ is not principal.

Conversely, suppose that $G$ is a finite product of copies of $\mathbb{Z}_2$. Let $a,b\in D-\{0\}$ such that $N(a)\ |\ N(b)$. Since $D$ is a Dedekind domain, we may consider the prime power factorizations  $aD=P_1\cdots P_n$ and $bD=P_{n+1}\cdots P_m$. We use now the fact that each element in $G$ has order $\leq 2$. 
We replace some of the factors $P_i$ by their conjugate such that finally no pair of distinct conjugates appears in list $\{ P_1,...,P_m\}$. 
Doing this we change $a$ and $b$ but we preserve their norm. Moreover, in the new setup it follows that $a$ divides $b$, so $N(b)/N(a)=N(b/a)\in Im(N)$.
\end{proof}

\begin{remark}
With notation above, $S$ is an M-wire on lattice $\mathbb{N}$ provided $D=\mathbb{Z}[\sqrt{-5}]$ (since its class group is $\mathbb{Z}_2$) but $S$ is not an M-wire on lattice $\mathbb{N}$ if $D=\mathbb{Z}[\sqrt{-17}]$ since its class group is $\mathbb{Z}_4$ or $N(5+\sqrt{-17})=42$, $N(2+\sqrt{-17})=21$ but there is not a single element in $\mathbb{Z}[\sqrt{-17}]$ of norm 2.
\end{remark}

We put Theorem \ref{51} to work. Let $r$ be a weak ideal system on a monoid $H$. Recall that the map $r_s:\mathcal{P}(H) \rightarrow \mathcal{P}(H)$ 
given by
$$ X_{r_s}=\bigcup \{ Z_r| Z \mbox{ finite subset of } X\} \mbox{ for all } X\subseteq H$$ is a finitary weak ideal system 
called the finitary weak ideal system associated to $r$.
See \cite[Chapter 3]{H} for details.

\begin{remark}\label{10}
We give the following application of Theorem \ref{51}.
To a lattice   $L$   we can canonically associate a lattice   $L'$ generated by compact elements. 
Let $r$ be the ideal system on $H$ constructed in  Theorem \ref{51} for $H=L$. Let $r_s$ be the finitary weak ideal system associated to $r$ recalled above.
By Theorem \ref{5}, the lattice  $L'=I_{r_s}(H)$ of all $r_s$-ideals of $H$ 
is generated by compact elements. By Theorem \ref{51} and the definition of $r_s$ we get
$$X_{r_s}=
\{ h\in L\ |\ h\leq h_1\vee ... \vee h_n \mbox{ for some } h_1,...,h_n\in X\}.
$$
We get the set embedding
$$ L\rightarrow L',\ \ x \mapsto [0,x]
$$
which is a lattice isomorphism when $L$ is generated by compact elements, because in that case $r=r_s$. 
\end{remark}

\begin{example}\label{85}
As an illustration of Remark \ref{10} consider the lattice $L=[0,1]\subseteq \mathbb{R}$ with usual number multiplication with $\bigvee=sup$ and $\bigwedge=inf$. No nonzero element $x$ of $L$ is  compact because 
$$x = \bigvee \{ x-1/n\ |\ n\geq 1/x,\ n\in \mathbb{N}\}$$
but any finite subjoin is $<x$.
Performing the construction in Remark \ref{10} we get the lattice
$$L'=A\cup B \mbox{ with } A=\{\ [0,x]\ |\ x\in [0,1]\} \mbox{ and } B=\{\ [0,x)\ |\ x\in [0,1]\}.
$$
The multiplication in $L'$ is the usual interval multiplication.
For $X\subseteq L$ with $a=sup(X)$, we get that $X_{r_s}$ is $[0,a]$ resp. $[0,a)$ if $a\in X$ resp. $a\notin X$ .
Each element of $A$ is compact in $L'$.
\end{example}

\end{document}